\documentclass[12pt,reqno]{article}

\usepackage{amsmath,amsthm,amssymb,amsfonts}
\usepackage[usenames]{color}
\usepackage{graphicx}
\usepackage{verbatim}
\usepackage[inline]{enumitem}

\usepackage{tikz}
\usetikzlibrary{arrows.meta,positioning}

\usepackage{authblk}

\usepackage{hyperref}

\theoremstyle{plain}
\newtheorem{theorem}{Theorem}

\theoremstyle{definition}
\newtheorem{definition}{Definition}
\newtheorem{example}{Example}

\theoremstyle{remark}

\newtheorem*{remark*}{Remark}

\newenvironment{packed_item}{
\begin{itemize}
\setlength{\itemsep}{1pt}
\setlength{\parskip}{0pt}
\setlength{\parsep}{0pt}
}{\end{itemize}}
\ExplSyntaxOn
\NewDocumentCommand{\iqtest}{m}{
\seq_set_split:Nnn\l_tmpa_seq{,}{#1}
\begin{enumerate*}[label=\textbf{\alph*)},itemjoin=\quad\quad\quad]
\seq_map_inline:Nn \l_tmpa_seq { \item ##1 }
\end{enumerate*}}
\ExplSyntaxOff

\begin{document}

\title{Immensely Questionable Tests}

\author{Siyona Agarwal}
\author{Julian Bernhoft}
\author{Karam Gill}
\author{Yonis Gulleth}
\author{Eric Huang}
\author{Benjamin Li}
\author{Brandon Ni}
\author{Soham Samanta}
\author{Leone Seidel}
\author{Boya Yun}
\affil{PRIMES STEP}
\author{Tanya Khovanova}
\affil{MIT}
%\date

\maketitle

\begin{abstract}
We study self-referential multiple-choice tests in which the answer choices are positive integers and the question asks how many answer choices are correct. These tests quickly lead to several kinds of ambiguity and paradox. The paper presents the ideas through a dialogue between Queen Tanya and her ten sages, developing a framework for studying these immensely questionable tests.
\end{abstract}

\section{Introduction}

This paper explores the math behind self-referential multiple-choice questions, which we call \textit{Immensely Questionable tests} or \textit{IQ tests} for short. The initial idea for these tests comes from a similar puzzle described in percentages in the book \textit{Mathematical Puzzles and Curiosities} \cite{DKS}. We want to see what happens if we look at these tests using integer sequences instead of percentages. Our paper is entirely built around one important question: ``How many correct answer choices are there?'' We present our research as a story in which Queen Tanya challenges her ten sages to solve these tricky tests, with each test introducing new ideas. What starts as a simple question gets complicated very quickly. 

\section{The Story Begins}

Queen Tanya realizes on a rainy day in her castle that she has no successor for the throne. She needs to find a successor, so she comes up with a brilliant idea: IQ tests! She calls upon ten sages: Ben, Boya, Brandon, Eric, Julian, Karam, Leone, Soham, Siyona, and Yonis. Having been rudely awakened from his contemplation on an IMO problem, Sage Ben interjects, ``Why are we here in the first place?'' Queen Tanya replies, ``Silence, young Ben, you will find out soon.'' All the sages sit with worried looks on their faces, anticipating all the possibilities that could ensue. Queen Tanya stands on her throne and announces, ``I am looking for a successor.'' The Sages' blood pressure spikes. She continues, ``Don't worry, it won't be one of you.'' Sage Yonis rudely interrupts and says, ``And how will you decide?'' Queen Tanya replies, ``With an IQ test.'' Sage Brandon smirks and says, ``Why do you need us? Surely we can pass any test you design.'' Queen Tanya grins slyly and says, ``I need your help to test the tests.'' Queen Tanya hands out a pamphlet with a note: ``To try this test, you need to pay \$13.'' The test consists of the following question:

\begin{quote}
    How many correct answer choices are there?
\begin{center}
\iqtest{2,3,4,4}
\end{center}
\end{quote}

By the way, this question is inspired by a similar puzzle described as a percentage puzzle in the book \cite{DKS}, where it is called an IQ test. So Tanya kept the name.

\begin{quote}
If you choose the answer to this question at random, what is the probability you will be correct?
\begin{center}
\iqtest{25\%,50\%,60\%,25\%}
\end{center}
\end{quote}

\section{Test 1: (2,3,4,4)}

The sages haven't seen the book and immediately start discussing the test. 

Sage Soham takes the lead: ``There are no correct answers here. But I'm sure there will be more tests, and they will be more difficult. We should prepare. Let us denote the number of options by $K$. We can also assume that the options are in non-decreasing order. So we denote the test as $(c_1,c_2,\dots, c_K)$, where $c_1 \le c_2 \le \cdots \le c_K$.''

Sage Leone continues: ``I think the cost is not random; it is the sum of all the options. Let us denote it by $C$. By our definition, we have
\[c_1 + c_2 + \dots + c_K = C.\]''

Sage Soham says, ``The cost of such a test is the sum of all the values. Thus, such a test corresponds to a \textit{partition} of its cost. By the way, the number of partitions of $n$ is denoted by $p(n)$.''

Sage Yonis says while yawning, ``Let's see what awaits us tomorrow.''

\section{Test 2: (1,1,2,2)}

The next day, Queen Tanya gives a new test: $(1,1,2,2)$ of cost $6$.

Sage Siyona is quick to answer, ``The answer should be two because two appears twice.'' 
Sages Leone and Boya nod their heads, agreeing. Sage Karam yawns and says, ``Is this all you want from us today?'' Tanya chuckles and says, ``I will be back tomorrow.''

Sage Brandon exclaims, ``Who knows how hard these questions will get? We must formulate a general strategy to solve these tests.'' Sage Eric suggests, ``If the correct answer is $k$, then it must appear $k$ times, right? What more is here?''

Sage Leone mentions, ``Notice that we  have confirmed that the cost is the sum of the options.''

\section{Test 3: (1,2,2)}

The next day, Queen Tanya brings a new test: $(1,2,2)$. Sage Karam says, ``One appears once, so one must be the correct answer.'' To that, Sage Julian replies, ``Two would actually be the correct answer since two appears twice.'' 
Sage Karam comments, ``Oh no, there are two correct answers: $1$ and $2$.''

Sage Soham replies, ``Well, that wouldn't work since we can't have two correct answers.'' To that, Sage Ben replies, ``Who said we couldn't have two correct answers?''

Sage Soham says, ``So since both $1$ and $2$ are correct, then we have $3$ correct options, so $3$ is the actual answer.'' Sage Karam then responds, ``But there is no answer $3$ on the test.''

Sage Eric states, ``In general, I don't think we can have two correct answers since if we assume answers $a$ and $b$ are both correct, then we have $a+b$ correct answers so neither $a$ nor $b$ is correct.'' Sage Boya then questions, ``All of this is a mess, so are both $a$ and $b$ correct, or are none of them correct?''

Sage Julian says, ``Let's adjust our definitions. The word correct is too strong.''

\begin{definition}
    For every answer $a$, we denote the number of times it appears in the test by $m_a$ and call it the \textit{multiplicity} of $a$. We call an answer $a$ \textit{valid} if it appears exactly $a$ times: $m_a = a$. We call the sum of the multiplicities of the valid answers a \textit{validity score} and denote it as $V$.
\end{definition}

Sage Julian clarifies, ``We don't count the total number of distinct answers; we count all options. In today's test, the validity score is $3$.'' Sage Karam goes further, ``The validity score is the sum of all the multiplicities of distinct valid answers.''

Sage Ben adds, ``Let us define the multiplicity of a set of answers. One day we might need it.''

\begin{definition}
    Given a set of answers $S$, we denote the total number of times the answers in $S$ appear as options as $m_S$ and call it the \textit{multiplicity} of $S$: $m_S=\sum_{a\in S} m_a$.
\end{definition}

Sage Yonis continues, ``So you are saying that the test is correct, in other words, has one valid answer, if the block of rows corresponding to the answer $a$ forms an $a \times a$ square, right? In case you didn't know, a Young diagram is a shape that looks like a rectangular grid with a bottom-right part cut off. It corresponds to a partition, where the $i$th row from the top has the same number of cells as the $i$th largest part.''

Sage Karam gets busy and makes diagrams for Tests $2$ and $3$ in Figures~\ref{fig:test2} and \ref{fig:test3}, respectively. The valid answers are gray. Test 2 has one valid answer marked as a gray 2-by-2 square. In Test 3, all answers are valid, so the whole diagram is gray.

\begin{figure}[ht!]
		\centering
		\begin{minipage}{0.45\textwidth}
			\centering
			\begin{tikzpicture}[scale=0.6]
				\draw[fill=gray!30, thick] (0,2) rectangle (2,4);
				\draw (0,3) -- (2,3);
				\draw (1,2) -- (1,4);
				\draw[fill=gray!0, thick] (0,0) rectangle (1,2);
				\draw (0,1) -- (1,1);
			\end{tikzpicture}
			\caption{Test 2: $(1,1,2,2)$}
            \label{fig:test2}
		\end{minipage}\hfill
		%young diagram for test 3
		\begin{minipage}{0.45\textwidth}
			\centering
			\begin{tikzpicture}[scale=0.6]
				\draw[fill=gray!30, thick] (0,1) rectangle (2,3);
				\draw (0,2) -- (2,2);
				\draw (1,1) -- (1,3);
				\draw[fill=gray!30, thick] (0,0) rectangle (1,1);
			\end{tikzpicture}
			\caption{Test 3: $(1,2,2)$}
            \label{fig:test3}
		\end{minipage}
	\end{figure}

Sage Soham says, ``Let us look at the big picture. The first test didn't have any valid answers. The second test had one valid answer, and there was no controversy there. This test has two valid answers. Maybe tomorrow we will have three valid answers.''

Sage Leone suggests: ``Let's have some definitions.''

\begin{definition}
We call a test \textit{unsolvable} if there are no valid answers. We call a test \textit{solvable} if there is at least one valid answer. We call a test \textit{monosolvable} if there is exactly one valid answer. We call a test \textit{polysolvable} if it has more than one valid answer. In addition, we call a test $k$-\textit{solvable} if it has exactly $k$ valid answers.
\end{definition}

Sage Julian notices, ``In particular, monosolvable tests are also $1$-solvable. Now, all the tests we saw are defined.''

\begin{example}
The first test $(2,3,4,4)$ is unsolvable. The second test $(1,1,2,2)$ is monosolvable with the only valid answer $2$. Today's test $(1,2,2)$ is polysolvable with valid answers $1$ and $2$, and, more precisely, it is $2$-solvable.
\end{example}

Sage Eric adds, ``What should we call a correct test or a correct answer, for that matter? An unsolvable test cannot be correct. If the test is polysolvable, then the validity score $V$ is greater than any of the valid answers, implying that the valid values are incorrect. Thus, we can argue that we should call an answer \textit{correct} if it is the only valid answer of a monosolvable test.''

Sage Ben mentions, ``By the way, we can describe a valid answer in terms of partitions and Young diagrams. Suppose we have a partition of $C$. We can assume that the rows of the Young diagram correspond to answer options. We can split the Young diagram into rectangles of different lengths. And the answer is valid when the corresponding rectangle is a square.''

Sage Yonis says, ``Let's discuss more details tomorrow after a good night's sleep.''

\section{Test 4: (1,2,2,3,3,3,3)}

The sages don't really get a good night's sleep, worrying about the next test. Tired, the sages wait to see what comes next. However, the next day, the sages are presented with a much harder question, and quite a deceptive one it is. The sages walk into the main hall, and Queen Tanya, with a cunning smile spreading across her face, asks the question, ``What is the number of correct answers to this question?'' The sages wait eagerly to hear what the answer choices are. Queen Tanya reads the test out loud: $(1,2,2,3,3,3,3)$.

Sage Soham declares, ``This seems like the one yesterday: Both $1$ and $2$ are valid answers.'' Sage Eric continues, ``Not quite; the total number of valid options is also on the test.'' Sage Yonis exclaims, ``Our biggest fear has come true. Tanya has given us a paradox!''

Sage Siyona, dreading the punishment for giving wrong answers, commands, ``Get your act together, everyone. Let's find some consensus.'' Chaos breaks out as all the sages are worried. 

Sage Boya shouts, ``There is nothing to fear. Let's give a name to such a test. Let's call such tests paradoxical.''

\begin{definition}
Let $a_1<a_2<\cdots<a_k$ be all the distinct valid answers of a test, where $k\geq 2$. Its validity score is
\[
V=\sum_{i=1}^k m_{a_i}=\sum_{i=1}^k a_i.
\]
The test is called \emph{paradoxical}, or a \emph{para-test}, if $V$ occurs among its answer choices.
\end{definition}

Sage Brandon says, ``I like the idea of naming everything. Let's give a name to all Tanya's tests: \textit{Immensely Questionable tests} --- IQ tests for short.''

Sage Eric jumps in, ``An answer $a$ is para-valid if and only if, when the rows corresponding to the valid answer choices are all put together and set to length $a$, a square is formed.''

\begin{example}
    Consider test $(1, 2, 2, 3)$. The answer $3$ is para-valid because when the rows of the valid answers $\{1, 2\}$ are taken, and all of their lengths are set to $3$, a square is formed, as seen in Figure~\ref{fig:paravalid}.

\begin{figure}[ht!]
    \centering
\begin{tikzpicture}[scale=0.6]
  \draw[fill=white, thick] (0,3) rectangle (3,4);
  \foreach \x in {1,2} \draw (\x,3) -- (\x,4);
  \draw[fill=gray!30, thick] (0,1) rectangle (2,3);
  \draw (0,2) -- (2,2);
  \draw (1,1) -- (1,3);
  \draw[fill=gray!30, thick] (0,0) rectangle (1,1);
  \draw[-latex, thick] (3.4,1.5) -- (4.2,1.5);
  \begin{scope}[shift={(4.6,0)}]
    \draw[fill=gray!30, thick] (0,1) rectangle (2,3);
    \draw (0,2) -- (2,2);
    \draw (1,1) -- (1,3);
    \draw[fill=gray!30, thick] (0,0) rectangle (1,1);
  \end{scope}
  \draw[-latex, thick] (7.0,1.5) -- (7.8,1.5);
  \begin{scope}[shift={(8.2,0)}]
    \draw[fill=gray!30, thick] (0,1) rectangle (2,3);
    \draw (0,2) -- (2,2);
    \draw (1,1) -- (1,3);
    \draw[fill=gray!30, thick] (0,0) rectangle (1,1);
    \draw[fill=gray!10, dashed, thick] (2,1) rectangle (3,3);
    \draw[dashed] (2,2) -- (3,2);
    \draw[fill=gray!10, dashed, thick] (1,0) rectangle (3,1);
    \draw[dashed] (2,0) -- (2,1);
  \end{scope}
\end{tikzpicture}
    \caption{A para-valid answer from the point of view of a Young diagram}
    \label{fig:paravalid}
\end{figure}
\end{example}

\section{Test 5: (1,2,2,3,3,3,3,4,4)}

The next day, Queen Tanya comes back and asks, ``Are you fit to be my sages?'' She gives the next test: $(1,2,2,3,3,3,3,4,4)$.

Sage Soham asks, ``What is going on here? My goodness, is it possible that the validity score is on the test, and its multiplicity is also on the test?''

Sage Brandon replies, ``We have to introduce the notion of time: different time steps. Let us denote the total number of valid options on the test, not by $V$, but by $V_1$.''

Sage Soham clarifies, ``To be concise, we should call it the \textit{first validity score}.''

Sage Brandon continues, ``In today's test, the first validity score $V_1$ is on the test.''

Sage Boya gives a formal definition of a para-valid answer.

\begin{definition}
For a polysolvable test, if $V_1$ occurs among the answer choices, then the answer value $V_1$ is called \emph{para-valid}.
\end{definition}

Sage Boya continues, ``In today's test, the answer $3$ is para-valid. Let us denote the multiplicity of the para-valid answer by $V_2$. In yesterday's and today's tests, the multiplicity $V_2 = 4$.''

Sage Yonis exclaims, ``But there is a huge difference. Yesterday, $V_2$ wasn't one of the answers, and today it is. Suppose the answer $V_2$ is on the test. Then we call it \textit{para-para-valid}, and denote its multiplicity as $V_3$. We should expand our definition.''

The sages stare at the word para-para-valid. Somewhere in the castle, a dictionary quietly resigns.

\begin{definition}
     For polysolvable tests, set $V_1$ equal to the validity score. If $V_i$ occurs as an answer, call this answer \textit{para$^i$-valid} and define $V_{i+1}=m_{V_i}$. If $V_i$ doesn't occur as an answer, the process terminates.
\end{definition}

Sage Leone dreams aloud, ``Can we have a para-para-para-valid answer? My goodness, can this continue indefinitely?''

Sage Karam has an AHA moment and exclaims, ``I can figure out an example.''

\begin{example}
    Consider a test with an infinite number of options:
    \[(1,\ 2,\ 2,\ 3,\ 3,\ 3,\ 3,\ 4,\ 4,\ 4,\ 4,\ 4,\ \ldots).\]
    This test has one $1$, two $2$s, and $a+1$ copies of the answer $a$ for any positive integer $a > 2$. The valid answers are $1$ and $2$. We can find that $V_i=i+2$ for positive $i$.
\end{example}

Queen Tanya comments, ``Be careful with infinite tests. Do you require that every answer value have finite multiplicity and that the initial validity score is finite?'' The sages continue discussing the infinite possibilities offered by infinite tests.

That evening, Queen Tanya calls the sages into the hall and tells them, ``I noticed that you guessed a few more tests that I prepared for you. Maybe I shouldn't chop off your heads after all this is over.'' The sages nod their heads, which seem to be safe so far, and retire for the evening.

\section{Test 6: (1,2,2,3)}

The next day, Queen Tanya brings a new test: $(1,2,2,3)$. It has a low cost, a meager \$8.

The sages are excited because the test looks simpler than the previous ones.

Sage Leone exclaims, ``Is this the cheapest paradoxical test?''

Sage Boya immediately starts coding, then pronounces, ``The total number of para-tests, namely tests that have at least one para-valid answer, is the following sequence as a function of cost, starting from index $0$:''
\[0,\ 0,\ 0,\ 0,\ 0,\ 0,\ 0,\ 0,\ 1,\ 0,\ 0,\ 1,\ 1,\ 1,\ 3,\ 2,\ \ldots.\]
%0, 0, 0, 0, 0, 0, 0, 0, 1, 0, 0, 1, 1, 1, 3, 2, 4, 4, 7, 7, 13, 13, 20, 22, 32, 36, 52, 59, 81, 93, 122, 143, 190, 219, 285, 335, 425, 497, 628, 734, 914, 1077, 1325, 1556, 1909, 2234, 2719, 3187, 3847, 4507, 5418, 6329, 7566, 8839, 10508, 12255, 14523, 16898, 19947, 23187, 27254, 31629, 37080, 42942, 50186, 58044, 67619, 78088, 90750, 104605, 121264, 139582, 161408, 185499, 214064, 245602, 282825, 324053, 372384, 426042, 488693, 558278, 639223, 729258, 833518
Sage Karam looks at the results closely, ``Yes, it is confirmed that the cheapest para-test has cost $8$, while the next cheapest one has cost $11$. I wonder what the test of cost $11$ might be.'' Sage Ben immediately calculates, ``It should be $(1,2,2,3,3)$.''

Meanwhile, Leone programs and says, ``I get a different sequence: it only has $2$ tests for cost $14$. I found the tests $(1,2,2,3,6)$ and $(1,3,3,3,4)$.'' Karam adds, ``Oh, I see what was happening! The previous sequence included $(1,2,2,3,3,3)$, which is $3$-solvable. But it is easy to make a mistake and look at this test as $2$-solvable with $1$ para-valid answer.'' Sage Siyona stares at the test, ``Test $(1,2,2,3,3,3)$ is different from the other ones. If $1$ and $2$ were the only initially valid answers, $3$ would be para-valid, but here $3$ is one of the initial valid answers!''

Everyone now decides that the correct sequence should be 
\[0,\ 0,\ 0,\ 0,\ 0,\ 0,\ 0,\ 0,\ 1,\ 0,\ 0,\ 1,\ 1,\ 1,\ 2,\ 2,\ \ldots.\]
%0, 0, 0, 0, 0, 0, 0, 0, 1, 0, 0, 1, 1, 1, 2, 2, 4, 4, 6, 6, 13, 12, 18, 20, 29, 34, 46, 53, 75, 86, 113, 132, 174, 199, 262, 308, 387, 455, 574, 671, 842, 986, 1211, 1425, 1748, 2044, 2494, 2917, 3523, 4124

Sage Julian interrupts, ``You guys are getting excited about the money, but what about Test 6 itself? What can we say about it?''

Sage Eric remarks, ``What is interesting about getting to a valid answer as the $i$-th validity score is that the validity score stops changing after that. The validity score stabilizes.''

Sage Brandon continues, ``We have started looking for correct tests. I think this is another example where we can call a test correct, as it feels less paradoxical than other tests.''

Sage Siyona suggests, ``Let us call a polysolvable test \textit{eventually correct} if the para-process eventually reaches a valid answer.''

At this point, the sages get very tired and retire to their rooms, except for Sage Boya, who continues coding, ignoring everything else happening in the room. ``Hey guys, I have made Table~\ref{tab:para-tests-by-cost}, which gives the numbers of $k$-solvable para-tests of cost $C$, where I omit trailing zeros in each column.''

\begin{table}[ht!]
\centering
\begin{tabular}{|c|c|c|c|c|c|c|c|c|c|c|c|c|c|c|c|}
\hline
$k\backslash C$ & 8 & 9 & 10 & 11 & 12 & 13 & 14 & 15 & 16 & 17 & 18 & 19 & 20 & 21 & 22 \\ \hline
2 & 1 & & & 1 & 1 & 1 & 2 & 2 & 4 & 4 & 6 & 6 & 12 & 12 & 18 \\ \hline
3 & & & & & & & & & & & & & 1 & & \\ \hline
4 & & & & & & & & & & & & & & & \\ \hline
\end{tabular}
\caption{Numbers of $k$-solvable para-tests of cost $C$. Blank entries represent zero.}
\label{tab:para-tests-by-cost}
\end{table}

But the room is empty.

\section{Test 7: (1, 2, 2, 3, 3, 3, 3, 4, 4, 4)}

Queen Tanya's next test is $(1,2,2,3,3,3,3,4,4,4)$.

``What do we have here?'' says Leone, ``The valid answers are again $1$ and $2$. This means $V_1=3$, making $3$ para-valid. Since $3$ appears four times,
we get $V_2=4$, making $4$ para-para-valid. Calculating further, we get $V_3=3$, $V_4=4$, and so on, alternating between $3$ and $4$. Thus, unlike the previous examples, the para-process neither stops nor stabilizes: it enters a loop of length $2$.''

``Wonderful,'' says Sage Julian. ``The test is now thinking in circles. That makes two of us. I can make a diagram, see Figure~\ref{fig:test7-loop}.''

\begin{figure}[ht!] \centering \begin{tikzpicture}[ process/.style={circle, draw, thick, minimum size=1cm, align=center}, setnode/.style={rectangle, draw, thick, rounded corners, minimum height=0.8cm, align=center}, arrow/.style={-Latex, thick} ] \node[setnode] (valid) {$\{1,2\}$\\ valid answers}; \node[process, right=2.4cm of valid] (three) {$3$}; \node[process, right=2.4cm of three] (four) {$4$}; \draw[arrow] (valid) -- node[above] {$V_1$} (three); \draw[arrow] (three) -- node[above] {$V_2=m_3$} (four); \draw[arrow, bend left=35] (four) to node[below] {$V_3=m_4$} (three); \draw[arrow, bend left=35] (three) to node[above] {$V_4=m_3$} (four); \end{tikzpicture}
\caption{The para-process for Test 7 enters the cycle $3 \leftrightarrow 4$.} 
\label{fig:test7-loop} 
\end{figure}

Sage Leone argues, ``We have now seen a test that stops, a test that stabilizes, and a test that runs in circles. This is also a fair description of our meetings.''

\begin{theorem}
\label{thm:paracases}
    Given a para-test where each answer value have finite multiplicity, when we continue the para-process, we end up in one of the following cases:
    \begin{enumerate}
    \item We reach a validity score that is not one of the answers;
    \item The validity scores never repeat, which can only happen for infinite tests;
    \item If the validity score repeats, it starts cycling. Namely, if $i$ is the smallest index for which there exists $j < i$ such that $V_i = V_j$, then for any positive $k$, we have $V_{i+k} = V_{j+k}$.      
    \end{enumerate}
\end{theorem}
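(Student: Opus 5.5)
The plan is to view the para-process as iteration of a single deterministic map. Given the test, define a partial function $f$ on positive integers by $f(a)=m_a$ whenever $a$ occurs among the answers, and leave $f(a)$ undefined otherwise. By the definition of the para-process, $V_1$ is the (finite) validity score, and as long as $V_i$ occurs as an answer we have $V_{i+1}=m_{V_i}=f(V_i)$. The hypothesis that each answer value has finite multiplicity is exactly what guarantees that every $V_{i+1}$ is a positive integer, so the process is well defined at every step. The whole statement then reduces to a trichotomy for the orbit of $V_1$ under a partial map, where each step depends only on the current value.

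I would split into cases by asking whether the sequence ever stops or ever repeats. \textbf{Case 1:} some $V_i$ does not occur as an answer. Then the process terminates at that step, which is the first alternative. \textbf{Case 2:} the process never terminates and all $V_i$ are distinct. Then the infinitely many distinct values $V_1,V_2,\dots$ all occur as answer values, so the test has infinitely many options. This shows that this alternative can only happen for infinite tests. \textbf{Case 3:} the process never terminates and some value repeats. In this case I take the smallest index $i$ for which there is $j<i$ with $V_i=V_j$; this $j$ is unique by the minimality of $i$.

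For Case 3, I would prove $V_{i+k}=V_{j+k}$ by induction on $k$. The base case $k=0$ is the hypothesis. For the inductive step, assume $V_{i+k}=V_{j+k}$. Since the process does not terminate, this common value is an answer. Hence
\[
V_{i+k+1}=f(V_{i+k})=f(V_{j+k})=V_{j+k+1}.
\]
This gives the periodicity with period $i-j$, so the values cycle through $V_j,\dots,V_{i-1}$. In particular, once a repetition occurs the process can never terminate later. That means the three cases are exhaustive, and the first and third are genuinely distinct.

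The step needing the most care is not the induction, which is routine, but the bookkeeping at the start. One must make sure that $V_1$, defined as a sum over valid answers, and each later $V_{i+1}=m_{V_i}$ are finite, so that $f$ is a genuine function. One must also make sure that the rule producing $V_{i+1}$ depends only on the value $V_i$ and not on the index $i$. Otherwise equal values could have different successors, and the cycling argument would fail. Once this is settled, the finite case forces termination or cycling by pigeonhole, since there are finitely many answer values, and this is consistent with the theorem.
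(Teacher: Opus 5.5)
Your proof is correct and follows the paper's argument essentially step for step. You split off termination, note that never-repeating scores force infinitely many distinct answer values, and push $V_i=V_j$ forward by induction using the fact that $V_{n+1}=m_{V_n}$ depends only on $V_n$; your partial map $f(a)=m_a$ and the uniqueness remark about $j$ are just more explicit bookkeeping of the same idea.
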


\begin{proof}
    If we get to the validity score that is not one of the answers, we stop. Suppose this is not the case.
    
    If the validity scores never repeat, then the number of validity scores is infinite, implying that the test itself is infinite. For a finite test, every nonterminal $V_i$ is one of the finitely many answer values, so some validity score must eventually repeat.
    
    Suppose the validity score repeats, and $i$ is the smallest index such that $V_i$ repeats a previous validity score, that is, $V_i = V_j$, for $j < i$. Because the validity score $V_{n+1}$ is determined solely by $V_n$ and the fixed multiplicities in the test, the equation $V_i = V_j$ implies $V_{i+1} = V_{j+1}$, and by induction, $V_{i+k} = V_{j+k}$ for all $k \ge 0$. This confirms that the sequence enters a cycle of length $i - j$.
\end{proof}

The proof is short enough that everyone trusts it, which is unusual. Queen Tanya looks pleased, which everyone interprets as a positive sign about the survival of their necks.

Sage Brandon concludes, ``So, we have three options: stop, continue forever, or get into a loop, right?'' Sage Julian counters, ``Yes, but the theorem doesn't emphasize the most interesting case, when the loop has length $1$. In other words, when we stumble on a valid answer.''

\section*{Break}

The next day, Queen Tanya announces, ``My dear sages, you have done great work. I will allow you all to continue serving as my sages. Your brilliant and beautiful heads will stay in place.''

Everyone relaxes and starts playing Dungeons and Dragons. Sage Julian comments, ``Something bugs me. The first validity score is different from the other validity scores. If the validity score $V_i$ for $i > 1$ matches an answer in the test, it is always a single answer, implying that such $V_i$ is also a multiplicity. However, in our paradoxical tests, we start with several valid answers, and the first validity score is the sum of these multiplicities. So, theoretically, $V_1$ is different from the rest.''

Other sages whisper, ``Shh, let us rest.''

\begin{center}
    \includegraphics[scale=0.15]{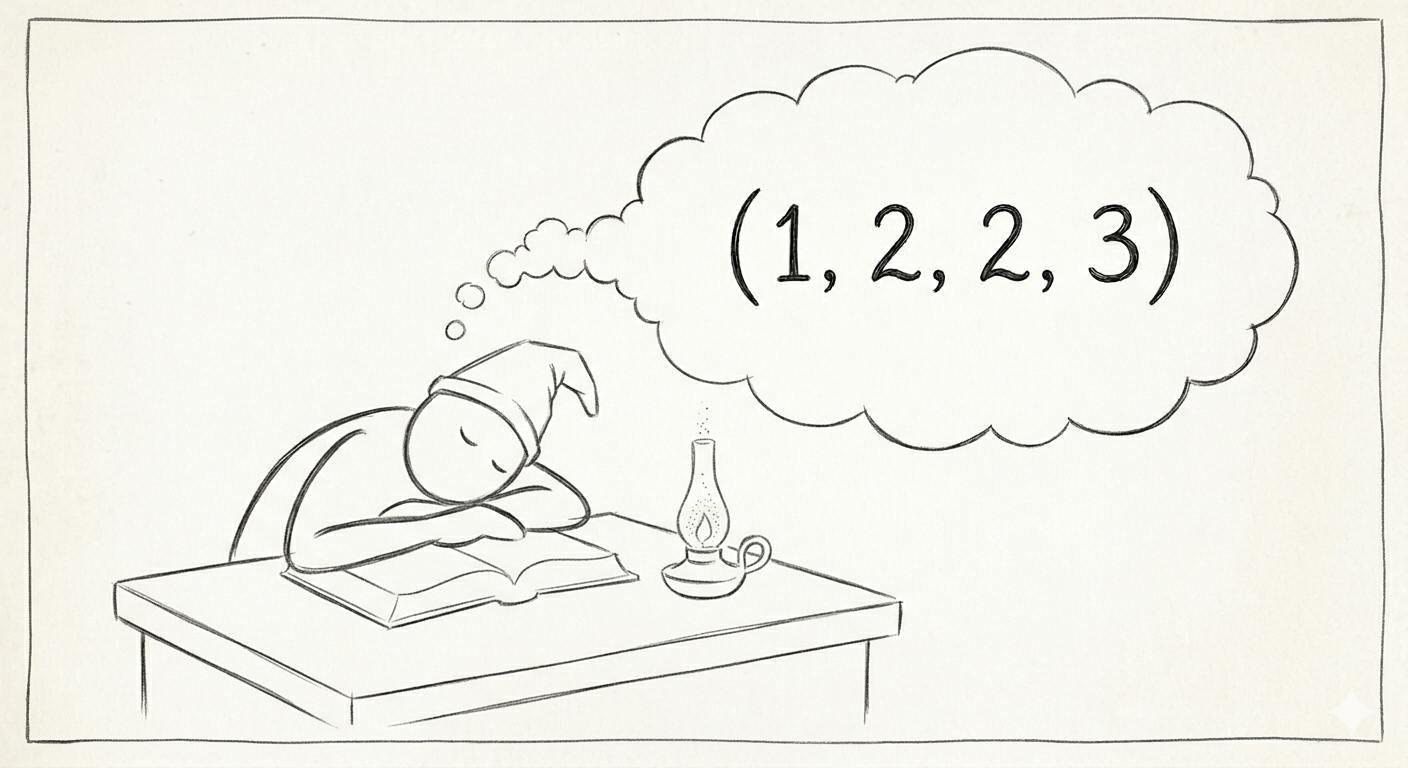}
\end{center}

Sage Brandon still can't stop thinking about all this. He murmurs to himself, ``Consider a paradoxical test $(1, 2, 2, 3)$, which currently has $1$ and $2$ as valid answers, and $3$ as a para-valid answer. 

Here, the way that an answer is judged to be valid is inconsistent, as the valid answers $1$ and $2$ count themselves when deciding if they are valid or not. However, the para-valid answer $3$ does not count itself when deciding whether it is valid or not. It uses the validity scores of the other answers. Hmm, this is confusing.''

Tanya goes to her wing of the palace, thinking that her job is done. She is mistaken.

\section{Meta-tests}

The next day, Queen Tanya is relaxing at her personal spa. But the sages come to her with a test of their own: $(1, 2)$.

Tanya says, ``So, this is easy. One valid answer, and this is it. This is a 1-solvable test.''

Sage Brandon replies, ``In all previous tests, former valid answers were no longer carried forward. What if we allow them to stay? I actually wondered about this last night.''

Sage Karam adds, ``If the answer $1$ stays valid, then together with $2$, we get two valid answers here. So the answer $2$ should be valid!''

Sage Siyona declares, ``This would be too confusing. Let's give it a different name. Let's call $2$ meta-valid. Let me give you another example.'' Sage Julian makes a diagram.

\begin{example}
\label{ex:new-meta}
Consider a test $(1, 2, 2, 4)$. We see that $1$ and $2$ are valid. If we add $4$ to the group of valid answers, the total multiplicity becomes $4$, making $4$ meta-valid.

\begin{figure}[ht!] \centering 
\begin{tikzpicture}
[ statenode/.style={rectangle, draw, thick, rounded corners, minimum height=0.9cm, minimum width=2.2cm, align=center}, arrow/.style={-Latex, thick} ] \node[statenode] (valid) {$\{1,2\}$\\ valid}; \node[statenode, right=5cm of valid] (meta) {$\{1,2,4\}$\\ with $4$ meta-valid}; \draw[arrow] (valid) -- node[above] {$m_{\{1,2\}}+m_4=3+1=4$} (meta); \end{tikzpicture}
\caption{In the test $(1,2,2,4)$, the valid answers $1$ and $2$ lead to the meta-valid answer $4$.}
\label{fig:meta-process} \end{figure}
\end{example}

``Aha,'' says Brandon, ``we have a new definition.''

\begin{definition}
An answer $a$ that is not already valid is called \textit{meta-valid} if $m_a$ plus the validity score equals $a$: $m_a + V_1 = a$.
\end{definition}

Sage Eric jumps in, ``An answer $a$ is meta-valid if and only if, when the rows corresponding to the valid answer choices and the rows of length $a$ are all put together and set to length $a$, a square is formed.''

\begin{example}
    Consider test $(1, 2, 2, 4)$. The answer $4$ is meta-valid because when the rows of the valid answers $\{1, 2\}$ are taken, along with the row of length $4$, and all of their lengths are set to $4$, a square is formed, as seen in Figure~\ref{fig:metavalid}.

\begin{figure}[ht!]
    \centering
    \begin{tikzpicture}[scale=0.6]
  \draw[fill=white, thick] (0,3) rectangle (4,4);
  \foreach \x in {1,2,3} \draw (\x,3) -- (\x,4);
  \draw[fill=gray!30, thick] (0,1) rectangle (2,3);
  \draw (0,2) -- (2,2);
  \draw (1,1) -- (1,3);
  \draw[fill=gray!30, thick] (0,0) rectangle (1,1);
  \draw[-latex, thick] (4.4,1.5) -- (5.2,1.5);
  \begin{scope}[shift={(5.6,0)}]
    \draw[fill=white, thick] (0,3) rectangle (4,4);
    \foreach \x in {1,2,3} \draw (\x,3) -- (\x,4);
    \draw[fill=gray!30, thick] (0,1) rectangle (2,3);
    \draw (0,2) -- (2,2);
    \draw (1,1) -- (1,3);
    \draw[fill=gray!30, thick] (0,0) rectangle (1,1);
    \draw[fill=gray!10, dashed, thick] (2,1) rectangle (4,3);
    \draw[dashed] (2,2) -- (4,2);
    \draw[dashed] (3,1) -- (3,3);
    \draw[fill=gray!10, dashed, thick] (1,0) rectangle (4,1);
    \foreach \x in {2,3} \draw[dashed] (\x,0) -- (\x,1);
  \end{scope}
\end{tikzpicture}
    \caption{A meta-valid answer from the point of view of a Young diagram}
    \label{fig:metavalid}
\end{figure}
\end{example}

Sage Soham comments, ``By the way, a meta-valid answer can never be valid.''

Sage Brandon proclaims, ``This looks like a new type of test, and we call it a \textit{meta-test}, because the new answer itself participates in the count. Did you notice that today we have the cheapest test with a meta-answer?''

\begin{definition}
   A \textit{meta-test} is a test with at least one valid answer and at least one meta-valid answer.
\end{definition}

Sage Leone writes a program to calculate the number of meta-tests given the cost and gets the following sequence starting from index 0:
\[0,\ 0,\ 0,\ 1,\ 0,\ 0,\ 1,\ 3,\ 1,\ 4,\ 3,\ 6,\ 8,\ 12,\ \dots.\]
%0, 0, 0, 1, 0, 0, 1, 3, 1, 4, 3, 6, 8, 12, 13, 25, 22, 34, 42, 60, 65, 105, 112, 157, 183, 241, 285, 385, 440, 579, 686, 874, 1020, 1310, 1527, 1921, 2269, 2798, 3295, 4087, 4765, 5852, 6883, 8329, 9772, 11827, 13807, 16595, 19424, 23179

Leone says, ``For cost 7, we have a jump in the number of tests. These are tests $(1,2,4)$, $(1,3,3)$, and $(2,2,3)$.''

Sage Leone mentions, ``This proves that we got the cheapest meta-test today. The second cheapest costs $6$. I wonder what the test is.'' Sage Soham calculates, ``It is $(1,2,3)$''.

Sage Ben says, ``The cheapest para-test is more expensive than the cheapest meta-test. Hmm.''

Sage Boya, who has apparently been coding under the table since breakfast, interrupts with another program, which counts the number of meta-tests of cost $C$ with exactly $k$ valid answers. The results are shown in Table~\ref{tab:meta-tests-by-cost}.

\begin{table}[ht!]
\centering
\begin{tabular}{|c|c|c|c|c|c|c|c|c|c|c|c|c|c|c|c|c|}
\hline
$k\backslash C$ & 3 & 4 & 5 & 6 & 7 & 8 & 9 & 10 & 11 & 12 & 13 & 14 & 15 & 16 & 17 & 18 \\ \hline
1 & 1 & & & 1 & 3 & 1 & 3 & 3 & 6 & 7 & 12 & 12 & 21 & 21 & 31 & 39 \\ \hline
2 & & & & & & & 1 & & & 1 & & 1 & 4 & 1 & 3 & 3 \\ \hline
3 & & & & & & & & & & & & & & & & \\ \hline
\end{tabular}
\caption{Numbers of meta-tests of cost $C$ with exactly $k$ valid answers. Blank entries represent zero.}
\label{tab:meta-tests-by-cost}
\end{table}

The room is silent. This is either because the table is impressive or because nobody wants to be asked to explain it.

Sage Brandon wonders, ``But if answer $2$ also becomes valid, wouldn't answer $1$ stop being valid because there would now be two valid answers?''

\section{Meta-meta- and so on}

The next day, Queen Tanya receives a new test $(1, 2, 3, 4)$.

Tanya looks at it with suspicion and says, ``Interesting, here $1$ is valid, but then $2$ could be considered valid because together with a valid $1$, it is meta-valid. Thus, we have $2$ valid answers, and together with $3$, there are $3$ valid answers, and so forth. In this example, the answer $3$ is, in a sense, meta-meta-valid, and the answer $4$ is meta-meta-meta-valid.''

Sage Eric adds, ``I suggest making a more compact notation. We can use \textit{meta$^j$-valid} for an answer that has $j$ meta prefixes.''

\begin{definition}
Let $A_{0}$ be the set of valid answers: $A_{0} = \{a : m_{a} = a\}$.
Let $V_1$ be its validity score:
\[V_{1} = m_{A_0} = \sum_{a \in A_{0}} m_{a} = \sum_{a \in A_{0}} a.\]
For $j \ge 1$, suppose that the sets $A_i$ of meta$^i$-valid answers for $0 \le i < j$ have already been defined. Then define:
\[V_{j} = m_{A_{0} \cup A_{1} \cup \dots \cup A_{j-1}} = \sum_{a \in A_{0} \cup A_{1} \cup \dots \cup A_{j-1}} m_{a}\]
\[A_{j} = \{a \notin A_{0} \cup A_{1} \cup \dots \cup A_{j-1} : m_{a} + V_{j} = a\}.\]
An answer $a \in A_{j}$ is called \textit{meta$^j$-valid}. Thus, the answers in $A_{0}$ are valid, the answers in $A_{1}$ are meta-valid, the answers in $A_{2}$ are meta$^2$-valid, and so on.
\end{definition}

Sage Eric whispers, ``At this rate, the prefixes will be longer than the tests.''

\begin{example}
Consider today's test $(1, 2, 3, 4)$. Here, $m_{1} = m_{2} = m_{3} = m_{4} = 1$. Thus, $A_{0} = \{1\}$, so $V_{1} = m_{1} = 1$. Since $m_{2} + V_{1} = 1 + 1 = 2$, we have $2 \in A_{1}$, so $2$ is meta-valid. Next, since $V_{2} = m_{1} + m_{2} = 2$ and $m_{3} + V_{2} = 1 + 2 = 3$, we have $3 \in A_{2}$, so $3$ is meta$^2$-valid. Similarly, since $V_{3} = m_{1} + m_{2} + m_{3} = 3$ and $m_{4} + V_{3} = 1 + 3 = 4$, we have $4 \in A_{3}$. Therefore, $4$ is meta$^3$-valid. 
\end{example}

Sage Soham continues, ``And the cheapest test that contains a meta$^j$-valid answer is the test $(1,2,\dots,j+1)$ with cost $\frac{(j+1)(j+2)}{2}$. Tada!''

Sage Julian says, ``Let's have an infinite test: $(1,2,3,4,\dots)$.''

\section*{Branching out}

The next day, Queen Tanya gets a new test: $(1, 2, 3, 3)$.

Sage Siyona argues, ``Whoa! I see what's happening here. We know that $1$ is valid. By our definition, both $2$ and $3$ are meta-valid. This means we have $2$ distinct meta-valid answers! In our definition above, we are supposed to consider both of them together. But what if we branch out: add one at a time?''

Sage Julian makes a corresponding diagram in Figure~\ref{fig:branching-process}.
\begin{figure}[ht!] \centering 
\begin{tikzpicture}[ statenode/.style={rectangle, draw, thick, rounded corners, minimum height=0.85cm, minimum width=2cm, align=center}, arrow/.style={-Latex, thick} ] \node[statenode] (start) {$\{1\}$\\ valid}; \node[statenode, above right=1.1cm and 2.4cm of start] (two) {$\{1,2\}$\\ add $2$}; \node[statenode, below right=1.1cm and 2.4cm of start] (three) {$\{1,3\}$\\ add $3$}; \draw[arrow] (start) -- node[above left] {$m_1 + m_2 = 2$} (two); \draw[arrow] (start) -- node[below left] {$m_1 + m_3 = 3$} (three); \end{tikzpicture}
\caption{The test $(1,2,3,3)$ branches: starting from the valid answer $1$, both $2$ and $3$ can be added.} \label{fig:branching-process}
\end{figure}

Sage Ben has an idea: ``I think I invented a test with an infinite number of branches.''

\begin{example}
Consider the test $(1, 2, 2, 4, 5, 5, 6, 6, 6, 7, 7, 7, 7, \dots)$, where each answer $a\geq 4$ appears $a-3$ times. Evaluating this test, we see that the answers $1$ and $2$ are valid, and then each $a\geq 4$ is meta-valid and has its own branch.  
\end{example}

Sage Soham comments, ``This example makes sense only under the one-branch-at-a-time interpretation, while under the coexistence definition $A_1$ contains infinitely many answers, so $V_2$ is infinite.''

Sage Soham continues, ``We can generalize this example. We can start with any finite set of valid answers $a_1,\ldots, a_n$, each appearing $a_i$ times. Then we can add all the numbers $x$ greater than the validity score $V_1 =\sum_{i=1}^{n}a_i$ to the test, with each such number $x$ appearing $x-V_1$ times. Each such $x$ will create a separate branch.''

Sage Siyona suggests, ``Potentially, there are \textbf{TWO} approaches to branching out. We can follow individual branches, or we can combine branches together, and assume that all branches are valid and coexist together.''

Queen Tanya interrupts, ``We are halfway through our paper; let's not have too many new definitions at this point. Though it is hilarious: You are branching out the case of branching out!''

\section*{Para-Graphs}

The next day, Queen Tanya is resting. However, Sage Yonis interrupts, ``I have an idea to build a graph for para-tests. Note that we can make exactly $2^d$ subsets of a set of $d$ distinct answers. Our graph consists of $2^d$ vertices corresponding to these sets. Consider a vertex corresponding to a set $S$ with multiplicity $m_S$.''

``What is the multiplicity of a set?'' interrupts Sage Siyona. ``Sum of the multiplicities of its elements,'' answers Sage Yonis. ``For each set $S$, we draw a directed edge. If the multiplicity of a set is one of the answers $a$, then we draw an edge to the set consisting of one element $a$. If the multiplicity of the set is not one of the options, we draw an edge to the empty set. Figure~\ref{fig:paragraphs} describes the graph for Test $(1,2,2,3)$.''

\begin{figure}[ht!]
    \centering
    \includegraphics[scale=0.27]{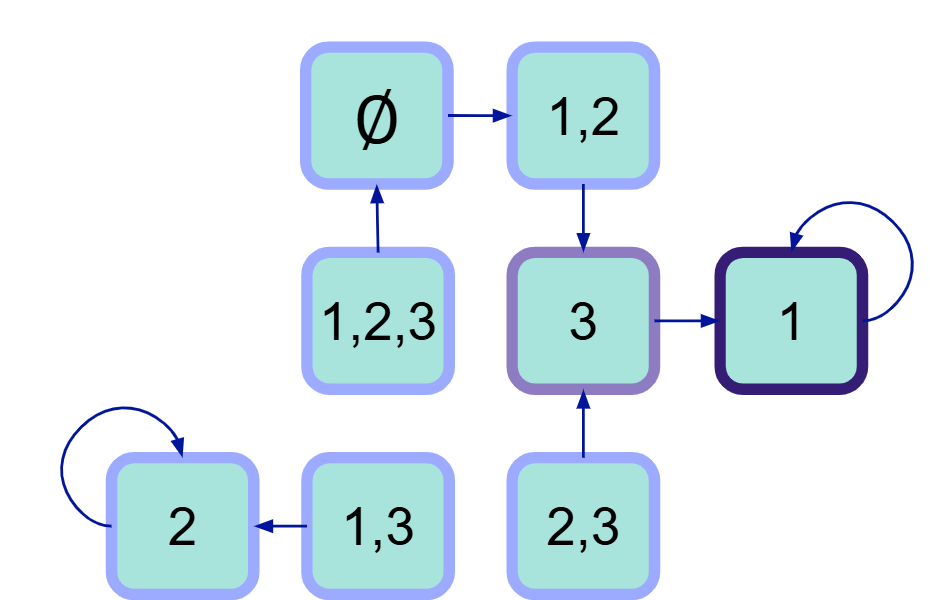}
    \caption{Para-graph for Test $(1,2,2,3)$}
    \label{fig:paragraphs}
\end{figure}

``What is the meaning of the edge from an empty set?'' Queen Tanya asks. ``This edge follows a special rule: It goes to the set of valid answers,'' Sage Siyona replies. ``If we have a para-test, the next edge always goes to the para-valid answer. If the path ends in a cycle of length $1$, the resulting answer is always valid, and is a good candidate for a `correct' answer.''

Sage Yonis notes, ``By the way, it is obvious that each vertex has exactly one outgoing edge. Moreover, if the set contains more than one answer, the edge goes to an empty set or a set with exactly one answer.''

Sage Siyona adds, ``While drawing directed graphs for different tests, I have come across an interesting fact: sometimes different tests have the same graph. For example, the tests $(1, 2, 2)$ and $(1, 3, 3, 3)$ correspond to isomorphic graphs.''

Queen Tanya argues, ``This example is not very illuminating. Both tests have two valid answers. It is not surprising that they have the same graph.''

\section*{Meta-Graphs}

Sage Siyona nods, ``I see, every IQ test can be represented as a directed graph. By the way, these graphs look different between para- and meta-tests.''

Sage Julian continues, ``Now, we consider meta-tests. We take a subset of answers $S$, and if there is an answer $a \notin S$ such that $m_S + m_a = a$, then we have an edge that points from set $S$ to set $S \cup \{a\}$. What is interesting is that each directed edge always points from a smaller subset to a larger subset. Moreover, the larger subset has to contain the smaller subset.''

Sage Ben says, ``Wow, it means that the graph has no cycles! Moreover, if we start with the subset of valid answers, we get an edge to the subset of valid and meta-valid answers, and so on.''

Sage Eric adds, ``You are forgetting that we have two possible
interpretations. If several answers can be added, we may add all of
them simultaneously, or we may create a separate branch for each one.
I made two pictures in Figure~\ref{fig:meta-graph} for the test
$(1,2,3,3)$. The left picture shows the coexistence interpretation,
in which $\{1\}$ leads to $\{1,2,3\}$. The right picture shows the
branching interpretation, in which $\{1\}$ leads separately to
$\{1,2\}$ and $\{1,3\}$.''

\begin{figure}[ht!]
    \centering
\includegraphics[scale=0.37]{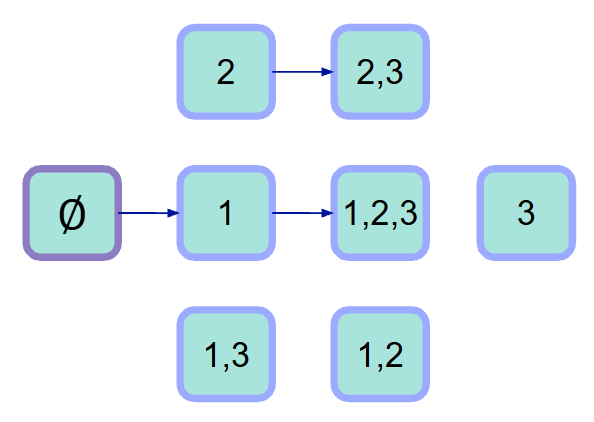} \qquad
\includegraphics[scale=0.37]{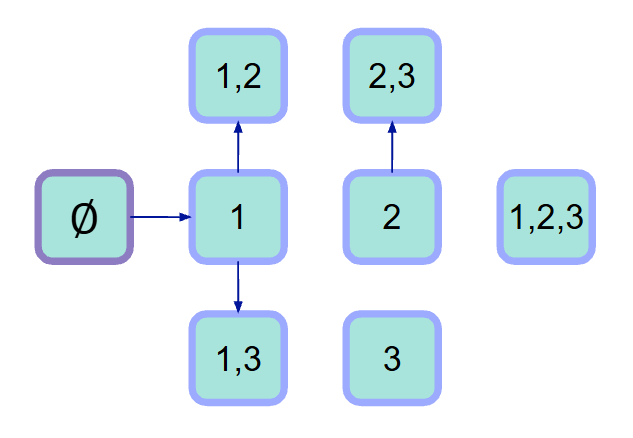}
    \caption{Two interpretations of the meta-process for $(1,2,3,3)$:
coexistence on the left and branching on the right}
    \label{fig:meta-graph}
\end{figure}

\section*{Definition Explosion}

After some thought, Sage Boya brings a suggestion: ``I want to suggest a million new definitions.''

\begin{definition}
We say that a test is
\begin{packed_item}
\item \textit{Solvable} if it has at least one valid answer.
\item \textit{Mono-solvable} if it has exactly one valid answer.
\item \textit{Meta-solvable} if it is a solvable test that has at least one meta-valid answer.
\item \textit{Meta-mono-solvable} if it is a mono-solvable test with exactly one valid answer and at least one other meta-valid answer.
\item \textit{Mono-meta-solvable} if it is a solvable test with valid answers that has exactly one meta-valid answer.
\item \textit{Mono-meta-mono-solvable} if it is a mono-solvable test with exactly one valid answer $a$ that has exactly one meta-valid answer.
\end{packed_item}
\end{definition}

Sage Eric is nitpicky: ``You exaggerated. You gave $6$ definitions, not a million.'' Sage Boya replies, ``I didn't exaggerate. I understated. We can continue this pattern of notation and definitions ad infinitum. See my example.''

\begin{example}
The test with answers $1,2,3,\dots,100$ is (mono-meta)$^{99}$-mono-solvable.
\end{example}

Queen Tanya stares at the board in silence, where $(\text{mono-meta})^{99}$-mono-solvable is written, and feels her blood pressure rise. The sages sense that something is off and avoid eye contact.

Sage Eric says, ``I see. I have an observation. An answer $a$ such that $m_a > a$ is never meta$^j$-valid.''

Sage Siyona comments, ``We have a meta-test example that branched out, but we do not have this issue with para-valid tests, as the para$^n$-valid answer is uniquely defined.''

Sage Boya says, ``Yes, this is why we don't have mono-para.''

\section*{The Test Becomes Self-Aware}

The next day, Queen Tanya gets a new test: $(1,2,2,3,4,4,4)$.

Sage Boya says, ``This looks like a meta-test, where the valid answer $2$, together with answer $3$ has a count of $3$, making $3$ meta-valid.''

Sage Eric continues, ``This example contradicts our definition, where we use all valid answers together to get a meta-valid answer. Let us give this test a different definition. What are synonyms of meta?''

Queen Tanya overhears the last question and blurts out, ``Facebook.''

``No-no,'' laughs Sage Eric, ``let us call it a self-aware test.''

\begin{definition}
A \textit{self-aware} test is a test for which there exists at least one non-empty subset $\{a_1,a_2,\dots,a_k\}$ of valid answers and another answer $c$ such that $c$ appears $c-\sum_{i=1}^ka_i$ times.
\end{definition}

Sage Boya adds, ``Remember Test 6: $(1,2,2,3)$, a paradoxical test we looked at a long time ago. It is actually also a self-aware test when $k=1$, $a_1=2$, and $c=3$. The cheapest paradoxical test that is not a self-aware test is $(1,2,2,3,3,3)$, with a cost of $14$.'' ``No-no,'' says Sage Yonis, ``This test is not paradoxical: its valid answers are $1$, $2$, and $3$, so its validity score is $6$, which is absent. The cheapest paradoxical test that isn't self-aware is $(2,2,3,3,3,5)$ with the cost of $18$.'' ``I don't agree,'' counters Sage Karam, ``The cheapest paradoxical test that isn't self-aware is $(1,2,2,3,3,3,3)$ with the cost of $17$.'' Everyone agrees with Karam.

Sage Boya opens his laptop. The other sages take this as a warning that a sequence is coming. Sage Boya writes a program to find the number of self-aware tests given the cost. The resulting sequence starting from index $0$:
\[0,\ 0,\ 0,\ 1,\ 0,\ 0,\ 1,\ 3,\ 2,\ 4,\ 3,\ 7,\ 9,\ 14,\ \dots.\]
%0, 0, 0, 1, 0, 0, 1, 3, 2, 4, 3, 7, 9, 14, 15, 26, 26, 38, 47, 69, 79, 115, 132, 178, 210, 274, 329, 436, 519, 662, 792, 1003, 1189, 1493, 1775, 2207, 2631, 3236, 3837, 4707, 5562, 6748, 7990, 9641, 11379, 13663, 16090, 19229, 22614, 26896, 31540, 37397, 43758, 51644, 60333, 70971, 82740, 97042, 112878, 132002, 153263, 178721, 207083, 240903, 278595, 323222, 373186, 431968, 497862, 575057, 661589, 762583, 875901, 1007534, 1155390, 1326604, 1518866, 1740800, 1990044, 2276911, 2599046, 2968813, 3383916, 3859230, 4392792, 5002071, 5685887, 6465154, 7339219, 8333318, 9447848, 10712981, 12130741, 13736997, 15535997, 17570867, 19848657, 22420619, 25298179, 28542510

Sage Brandon notices, ``A meta-test is also a self-aware test, implying that for every cost $C$, the number of self-aware tests has to be the same or greater than the number of meta-tests.''

Sage Karam continues, ``Since every meta-test is self-aware, the first genuinely new examples are self-aware tests that are not meta-tests. The cheapest self-aware test that is not meta should be at a cost of $8$, and the next one at a cost of $11$.''

\begin{example}
The cheapest self-aware test that is not a meta-test is $(1,2,2,3)$, which has cost $8$. The next such example is $(1,2,2,3,3)$ with cost $11$.
\end{example}

Sage Siyona says, ``I have an example where self-awareness leads to two different paths.''

\begin{example}
Consider the test $(1,2,2,4,4,5,5,5,5)$. The valid answers are $1$ and $2$.
If we use only the valid answer $1$, then adding the answer $5$ gives a total
multiplicity of $5$, since $1+4=5$. If we use only the valid answer $2$, then
adding the answer $4$ gives a total multiplicity of $4$, since $2+2=4$.
Thus, self-awareness can lead to two different paths.
\end{example}

\section*{Temporal validity}

Sage Brandon argues, ``I am not happy with how we treat meta-tests. I think we should introduce time steps and states that depend on time.''

\begin{definition}
    A \textit{state} is a set of answer values that we choose to be valid at a particular step.
\end{definition}

Here is how the iteration works. Consider a subset of answers $S$ generated at some step, with the total multiplicity of the answers $m_S$. For each answer $a$ such that $S \cup \{a\}$ has total multiplicity $a$, the next state contains $a$. If no such answer $a$ exists, the next state is the empty set. Here $a$ is allowed to belong to $S$. If $a\in S$, then $S\cup\{a\}=S$, so the condition reduces to $m_S=a$.

\begin{example}
   Consider the test $(1,3,3,5,5,7)$. Let us start with the subset $\{1,3\}$. The answers that satisfy the condition are $3$ and $5$. Therefore, the next state is $\{3,5\}$.
\end{example}

We can make a graph with such iterations. Then, to understand how to treat valid and correct answers, we start with the empty set.

Sage Soham interrupts, ``I see, the next state is the set of valid answers.''

\begin{example}
   Consider test $(1,3,3,5,5,7)$. We start with the empty set. The next set is the set of valid answers: $\{1\}$. After that, we get $\{1,3\}$, followed by $\{3,5\}$. Then we get to the empty set.
\end{example}

Sage Julian adds another diagram in Figure~\ref{fig:temporal-loop}.
\begin{figure}[ht!] \centering \begin{tikzpicture}[ statenode/.style={rectangle, draw, thick, rounded corners, minimum height=0.85cm, minimum width=1.8cm, align=center}, arrow/.style={-Latex, thick} ] \node[statenode] (empty) {$\emptyset$}; \node[statenode, right=2.1cm of empty] (one) {$\{1\}$}; \node[statenode, right=2.1cm of one] (onethree) {$\{1,3\}$}; \node[statenode, right=2.1cm of onethree] (threefive) {$\{3,5\}$}; \draw[arrow] (empty) -- (one); \draw[arrow] (one) -- (onethree); \draw[arrow] (onethree) -- (threefive); \draw[arrow, bend left=35] (threefive) to node[below] {no continuation} (empty); \end{tikzpicture} \caption{The temporal-validity process for $(1,3,3,5,5,7)$ gives a loop through four states.} 
\label{fig:temporal-loop} \end{figure}

Sage Brandon, never one to let a paradox go unnamed, immediately proposes a definition: ``We are in a loop of length $4$. So, I call it a $4$-looping test.''

Sage Siyona says, ``This is so confusing. Let me try a different example.''

\begin{example}
   Consider $(1,2,2,4)$. We start with the empty set. The next set is the set of valid answers: $\{1,2\}$. After that, we get $\{4\}$. Then we get to the empty set. Thus, it is a $3$-looping test. We can show this as a graph in Figure~\ref{fig:metagraph}.
   \begin{figure}[ht!]
       \centering
    \includegraphics[scale=0.4]{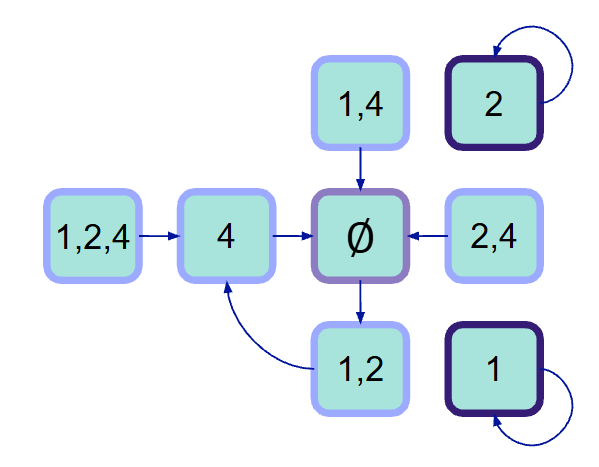}
    \caption{Meta-graph with temporal validity for Test $(1,2,2,4)$}
    \label{fig:metagraph}
   \end{figure}
\end{example}

Sage Ben explains, ``Similar to graphs for para-tests, in graphs for the meta-tests, the edge from an empty set also goes to the set of valid answers, and the next edge always goes to the set of meta-valid answers.'' Sage Yonis counters, ``No-no. This happens if the test is polysolvable. If the test is monosolvable, then the valid answer $a$ stays, and the next state contains $a$ together with the meta-valid answers.'' Sage Ben continues, `` I see. In the case of a path ending in a cycle of length $1$, the resulting state is also always valid and would similarly be a good choice for a 'correct' answer.'' Sage Eric notes, ``Like in para-graphs, each vertex has exactly one outgoing edge. However, unlike in para-graphs, an arrow from a non-empty set does not always have to point to an empty set or a set with exactly one answer. For example, in the test $(1, 2, 2, 4, 5, 5)$, the edge coming from the set $\{1, 2\}$ would go to the set $\{4, 5\}$.''

Sage Julian says, ``Does the loop always come back to the empty set?'' Sage Eric replies, ``No, I have come up with an example.''

\begin{example}
    Consider a test $(2, 2, 5, 5, 5, 6, 6, 7, 7, 7, 7)$. We start out as $\emptyset$, then $\{2\}$, then $\{2, 5\}$, then $\{5\}$, then $\{7\}$, then $\{6\}$, and then back to $\{5\}$. This means the test loops between $\{5\}$, $\{7\}$, and $\{6\}$. 
\end{example}

Sage Karam comments, ``We might also never get into a loop.'' Eric comes up with an example again.

\begin{example}
Consider the infinite test in which $1$ appears once, $2$ appears twice, and every odd answer $2n+1\geq 7$ appears $n+1$ times:
\[
(1,\ 2,\ 2,\
\underbrace{7,\ldots,7}_{4\text{ times}},\
\underbrace{9,\ldots,9}_{5\text{ times}},\
\underbrace{11,\ldots,11}_{6\text{ times}},\ldots).
\]
We leave it to the reader to check that the temporal-validity process is
\[
\emptyset\longrightarrow \{1,2\}\longrightarrow
\{7\}\longrightarrow\{9\}\longrightarrow\{11\}\longrightarrow\cdots.
\]
Therefore, the states never repeat, and the test is loopless.
\end{example}

\section*{Tanya's Headache}

Tanya gets a headache. ``What have I started? They have so many ideas, I can't process all of them.''

Sage Leone suggests, ``What if we \textbf{ask about incorrect answers}?'' He continues, ``This question is nuanced enough to create the liar paradox with just one answer choice.''

\begin{center}
How many incorrect answers are there?

\textbf{(a)} 1
\end{center}

Boya analyzes it, explaining, ``If I believe (a) is correct, then by its statement, (a) must also be incorrect. If I believe (a) to be incorrect, then the total number of incorrect answers becomes 1, and that makes (a) correct. This creates a liar paradox.'' Boya also suggests another test.

\begin{center}
How many incorrect answers are there?

\textbf{(a)} 2 \quad \textbf{(b)} 0
\end{center}

But other sages have so many ideas of their own that they decided to think about this test later. Sage Yonis has a different idea: ``What if \textbf{we allow zero}? Consider a test $(1,1,2)$. Then it has $0$ valid answers, and $0$ appears $0$ times. So the answer $0$ must be valid for this test, and this test is solvable!''

Queen Tanya interrupts, ``But the correct answer has to appear in the text, right? I am so confused.''

Sage Karam then has another interesting idea. ``What if,'' he says, ``we had tests with \textit{partially valid answers}?'' Sage Yonis comments, ``This is too complicated. What if instead of integers we allow \textbf{any real number}?'' Sage Ben continues, ``Let's start with a simpler idea. What if we \textbf{replace $100$ percent with any other value}?''

Sage Ben says, ``We can do many interesting things even before replacing $100$ with another number. For example, having an answer of more than 100\% in a percentage test doesn't make much sense. What if we \textbf{disallow answers that are greater than 100 in percentage tests}?'' Sage Eric counters, ``We can add a new parameter $b$ and \textbf{disallow answers that are greater than $b$ in percentage tests}.''

Sage Brandon suggests, ``What if, when we have a branch of $n$ possibilities, we give each one a \textbf{weight} of $\frac{1}{n}$?''

Sage Karam moves in a different direction, ``What if we allow a \textbf{non-integer number of options}?'' Sage Yonis adds, ``What if we \textbf{assign weights to the options}?''

Sage Brandon is deep in his thoughts, ``Currently, a self-aware test is an attribute that can be given to a test. I think that self-aware tests should instead be a \textbf{modifier} to how a test is solved, like temporal validity. This means that there are self-aware meta-tests, as well as self-aware para-tests, and the temporal validity versions.''

Sage Karam continues, ``What if the answers are not fixed numbers, but rather \textbf{probability distributions}? What if the cost is \textbf{dependent on the number of sages}?''

Tanya stops listening.

\section{Acknowledgments}

We are grateful to the PRIMES STEP program for giving us the opportunity to conduct this research. After the paper was written, we asked ChatGPT to check for mistakes.

\section{AI Usage}

After the paper was written, we used AI to check for mistakes.

\end{document}